\tikzstyle{pointV}=[circle,fill=black,inner sep=0.5mm]
\newtheorem{thm}{Theorem}[section]
\newtheorem{lem}[thm]{Lemma}
\newtheorem{rem}[thm]{Remark}
\newtheorem{deff}[thm]{Definition}
\DeclareMathOperator{\red}{red}
\begin{document}

\begin{center}
{\large \bf  On a family of universal cycles for multi-dimensional permutations}
\end{center}

\begin{center}
Sergey Kitaev$^{1}$ and Dun Qiu$^{2}$\\[6pt]

$^{1}$ Department of Mathematics and Statistics \\
University of Strathclyde, 26 Richmond Street, Glasgow G1 1XH, UK\\[6pt]

$^{2}$Center for Combinatorics, LPMC, Nankai University, Tianjin 300071, P. R. China \\[6pt]

Email: $^{1}${\tt sergey.kitaev@strath.ac.uk},
           $^{2}${\tt qiudun@nankai.edu.cn}
\end{center}

\noindent\textbf{Abstract.} A universal cycle (u-cycle) for permutations of length $n$ is a cyclic word, any size $n$ window of which is order-isomorphic to exactly one permutation of length $n$, and all permutations of length $n$ are covered. It is known that u-cycles for permutations exist, and they have been considered in the literature in several papers from different points of view.   

In this paper, we show how to construct a family of u-cycles for multi-dimensional permutations, which is based on applying an appropriate greedy algorithm. Our construction is a generalisation of the greedy way by Gao et al.\ to construct u-cycles for permutations. We also note the existence of u-cycles for $d$-dimensional matrices.\\

\noindent {\bf Keywords:}  universal cycle; combinatorial generation; greedy algorithm; multi-dimensional permutation; multi-dimensional matrix

\noindent {\bf AMS Subject Classifications:}  05A05

\section{Introduction}\label{intro}

A {\em universal cycle}, or {\em u-cycle},  for a given set $S$ with $\ell$ words of length $n$ over an alphabet $A$ is a circular word $u_0u_1\cdots u_{\ell-1}$ that contains each word from $S$ exactly once (and no other word) as a factor  $u_iu_{i+1}\cdots u_{i+n-1}$ for some $0\leq i\leq \ell-1$, where the indices are taken modulo $\ell$. For example, 020311 is a u-cycle for the set of words $\{020,031,102,110,203,311\}$. The notion of a universal cycle for combinatorial structures was introduced in \cite{CDG1992} and has been studied in the literature extensively for various objects. The celebrated {\em de  Bruijn sequences} are a particular case of such a u-cycle, where a set in question is the set $A^n$ of all words of length $n$ over a $k$-letter alphabet $A$. De Bruijn sequences of orders $2$ and $3$ for $A=\{0,1\}$ can be found in Figure~\ref{de-Bruijn-seq-ex}. A {\em universal word}, or {\em u-word}, for $S$ is a non-circular version of a universal cycle.

\begin{figure}
\begin{center}

\vspace{-0.8cm}

\begin{tikzpicture}
\tikzset{vertex/.style = {opacity=0}}
\tikzset{edge/.style = {->,> = latex'}}

\draw (2.3,2) node{0011 =};

\draw (2,1.4) node{00};
\draw (2,1) node{01};
\draw (2,0.6) node{11};
\draw (2,0.2) node{10};

\draw (3.3,2) node{0}; 
\draw (4,1.3) node{0}; 
\draw (4.7,2) node{1};
\draw (4,2.7) node{1};

\draw (4,2) circle (0.5cm); 

\node[vertex] (a) at  (4,1.6) {};
\node[vertex] (b) at  (3.6,2.1) {}; 
\draw[edge] (a) to[bend left] (b);


\draw (7.5,2) node{00010111 =};

\draw (10,3.2) node{0};
\draw (10,0.8) node{0};
\draw (11.2,2) node{1};
\draw (8.8,2) node{0};
\draw (9.1,1.1) node{0};
\draw (9.1,2.9) node{1};
\draw (10.9,2.9) node{1};
\draw (10.9,1.1) node{1};

\draw (10,2) circle (1cm); 
\node[vertex] (c) at  (10,1.1) {};
\node[vertex] (d) at  (9.1,2.1) {}; 
\draw[edge] (c) to[bend left] (d);

\draw (7.5,1.5) node{000};
\draw (7.5,1.1) node{001};
\draw (7.5,0.7) node{010};
\draw (7.5,0.3) node{101};
\draw (7.5,-0.1) node{011};
\draw (7.5,-0.5) node{111};
\draw (7.5,-0.9) node{110};
\draw (7.5,-1.3) node{100};

\end{tikzpicture}

\end{center}
\caption{De Bruijn sequences of orders $2$ and $3$ for $A=\{0,1\}$.}\label{de-Bruijn-seq-ex}
\end{figure}
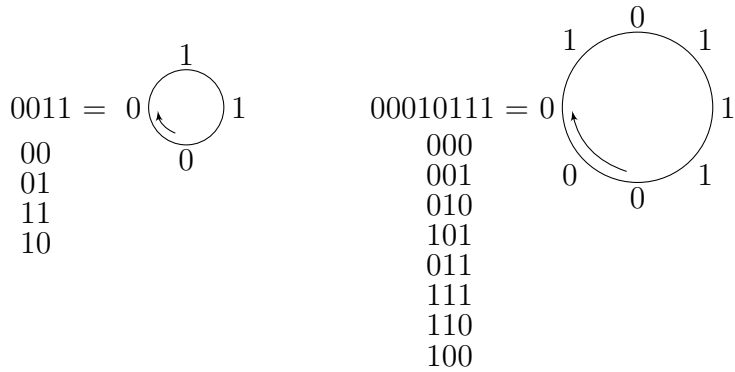

Establishing existence of u-cycles is normally done through considering {\em de  Bruijn graphs}, or similar suitable {\em transition graphs} in the context. A de Bruijn graph $B(n,k)$ consists of $k^n$ vertices corresponding to words in $A^n$ and its directed edges are $x_1x_2\cdots x_n\rightarrow x_2\cdots x_nx_{n+1}$ where $x_i\in A$ for $i\in\{1,2,\ldots,n+1\}$. See Figure~\ref{de-Bruijn-B(22)-B(23)} for $B(2,2)$ and $B(3,2)$. De Bruijn graphs are an important structure that is used in solving a variety of problems, for example, in combinatorics on words~\cite{M2005} and genomics~\cite{ZB2008}. These graphs were first introduced (for the alphabet $A=\{0,1\}$) by de Bruijn in 1944 to find the number of {\em code cycles}. 

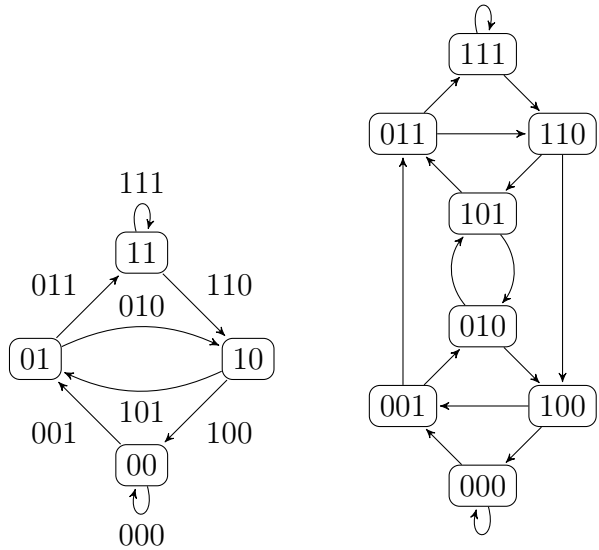
\begin{figure}
\begin{center}
\begin{tabular}{ccc}

\begin{tikzpicture}[->,>=stealth',shorten >=1pt,node distance=2cm,auto,main node/.style={rectangle,rounded corners,draw,align=center}]

\node[main node] (1) {11};
\node[main node] (2) [below left of=1] {01};
\node[main node] (3) [below right of=1] {10};
\node[main node] (4) [below right of=2] {00};

\path
(1) edge node {110} (3);
\path
(3) edge node  {100} (4)
     edge [bend left=25] node  {101} (2);
\path
(2) edge node {011} (1)
     edge [bend left=25] node  {010} (3);  
\path
(4) edge node {001} (2);         
\path
(1) edge [loop above] node {111} (1);
\path
(4) edge [loop below] node {000} (4);


\end{tikzpicture}

& 
\ \ \ \ 
&

\begin{tikzpicture}[->,>=stealth',shorten >=1pt,node distance=1.5cm,auto,main node/.style={rectangle,rounded corners,draw,align=center}]


\node[main node] (1) {111};
\node[main node] (2) [below left of=1] {011};
\node[main node] (3) [below right of=1] {110};
\node[main node] (4) [below right of=2] {101};
\node[main node] (5) [below of=4] {010};
\node[main node] (6) [below left of=5] {001};
\node[main node] (7) [below right of=5] {100};
\node[main node] (8) [below left of=7] {000};

\path
(1) edge node {} (3);
\path
(8) edge node {} (6);
\path
(4) edge node {} (2)
      edge [bend left=40] node  {} (5);
\path
(5) edge node {} (7)
     edge [bend left=40] node  {} (4);
\path
(3) edge node  {} (4)
     edge node  {} (7);
\path
(7) edge node  {} (6)
     edge node  {} (8);
\path
(6) edge node  {} (5)
     edge node  {} (2);
\path
(2) edge node  {} (1)
     edge node  {} (3);

\path
(1) edge [loop above] node {} (1);
\path
(8) edge [loop below] node {} (8);

\end{tikzpicture}

\end{tabular}

\end{center}
\caption{De Bruijn graph $B(3,2)$ is the line graph of de Bruijn graph $B(2,2)$}\label{de-Bruijn-B(22)-B(23)}
\end{figure}

\subsection{Constructing de Bruijn sequences}\label{constructing-de-Bruijn-seq}

To state some of our multi-dimensional results, we sketch here the well-known way to construct de Bruijn sequences via de Bruijn graphs, leaving justification to the reader as an easy exercise. We first need some background in graph theory.  

Let $G=(V,E)$ be a directed graph (digraph). For an edge $u\rightarrow v$ in $G$ $v$ is called the {\em head} and $u$ is called the {\em tail} of the edge. A {\em directed path} in $G$ is a sequence $v_1,\ldots,v_t$ of {\em distinct} nodes such that there is an edge $v_i\rightarrow v_{i+1}$ for each $1\leq i\leq t-1$. Such a path is  a {\em Hamiltonian path} if it contains all nodes in $G$. A closed Hamiltonian path ($v_t\rightarrow v_1$ is an edge) is a {\em Hamiltonian cycle}. If $G$ has a Hamiltonian cycle then $G$ is {\em Hamiltonian}. A digraph is {\em strongly connected} if there exists a directed path from any node to any other node.  A digraph is {\em connected} if for any pair of nodes $a$ and $b$ there exists a path in the underlying undirected graph (obtained from the digraph by removing all orientations). A {\em trail} in a digraph $G$ is a sequence $v_1,\ldots,v_t$ of nodes such that there is an edge $v_i\rightarrow v_{i+1}$ for each $1\leq i\leq t-1$ and edges are not visited more than once. An {\em Eulerian trail} in $G$ is a trail that goes through each edge exactly once. A closed Eulerian trail is an {\em Eulerian cycle}.  A directed graph is {\em Eulerian}  if it has an Eulerian cycle. Let $d^+(v)$ (resp., $d^-(v)$) denote the out-degree (resp., in-degree)  of a node $v$, which is the number of edges pointing from (resp., to) $v$. A directed graph is {\em balanced} if $d^+(v)=d^-(v)$ for each node $v$ in the graph. The following result is well-known and is not hard to prove.

\begin{thm}\label{Eulerian-trail} A digraph $G$ is Eulerian if and only if it is balanced and (strongly) connected.\end{thm}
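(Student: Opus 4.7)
The plan is to prove the two directions separately, with the forward direction being essentially immediate and the reverse direction requiring the bulk of the work via a Hierholzer-style splicing argument.

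For the forward direction, suppose $G$ admits an Eulerian cycle $C=v_1\rightarrow v_2\rightarrow \cdots \rightarrow v_m\rightarrow v_1$. I would first argue balance: each occurrence of a vertex $v$ in the interior of $C$ contributes exactly one incoming and one outgoing edge to $v$, and the closure of the cycle handles the endpoints, so $d^+(v)=d^-(v)$ for all $v$. For strong connectivity, since every edge appears in $C$ and every vertex of positive degree appears on $C$, any two such vertices are mutually reachable by following the cyclic order. Vertices of degree zero are isolated; if they exist one should either exclude them by convention or note that the statement is understood with respect to the underlying edge-touching vertex set.

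For the reverse direction, the approach is constructive. Assume $G$ is balanced and strongly connected (with at least one edge). First I would establish a lemma: starting from any vertex $v$ with an unused out-edge, greedily following unused edges must terminate by returning to $v$ and cannot get stuck earlier. Indeed, whenever the walk enters an intermediate vertex $w$ via an unused in-edge, the balance condition $d^+(w)=d^-(w)$ together with the fact that the number of used in-edges at $w$ currently exceeds the number of used out-edges guarantees an unused out-edge at $w$. Only at the start vertex $v$ can the counts of used in- and out-edges be equal, so the walk must close at $v$.

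Now I would iterate. Start at any vertex $v_0$ with positive out-degree and build a closed trail $T$ by the above procedure. If $T$ uses every edge, we are done. Otherwise, strong connectivity produces an edge $e=u\rightarrow x$ not in $T$ where $u$ lies on $T$: for if every vertex on $T$ had all its incident edges inside $T$, then no vertex off $T$ (or no unused edge) could ever be reached from $T$, contradicting strong connectivity on the subgraph of remaining edges together with $T$'s vertex set. Applying the lemma to the subgraph of unused edges (which is still balanced, being the difference of two balanced edge sets) starting at $u$, we obtain a closed trail $T'$ which I splice into $T$ at $u$, producing a strictly longer closed trail. Since the edge set is finite, iteration terminates at an Eulerian cycle.

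The main obstacle is the splicing step: one must carefully verify that at each stage the residual graph of unused edges remains balanced (routine, by subtraction) and that strong connectivity of the original $G$ forces some trail vertex to still have unused incident edges as long as edges remain. The cleanest way is to argue that the set of vertices $V(T)$ must meet some unused edge, by taking any unused edge $a\rightarrow b$ and using a path in $G$ from $V(T)$ to $a$; the last edge of this path before leaving $V(T)$ (or the edge $a\rightarrow b$ itself) is an unused edge with tail on $T$.
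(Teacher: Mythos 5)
Your proof is correct. Note, however, that the paper offers no proof of this theorem at all: it is stated as ``well-known and not hard to prove,'' so there is no argument of the authors' to compare against. What you have written is the standard Hierholzer splicing proof, and the two points that usually need care are both handled properly: (i) the counting argument showing that a greedy walk in a balanced (sub)graph can only terminate at its starting vertex, since at any intermediate vertex the used in-edges exceed the used out-edges by one; and (ii) the use of strong connectivity to produce an unused edge whose tail lies on the current closed trail --- your observation that the first edge of a directed path from $V(T)$ to the tail of an unused edge that crosses out of $V(T)$ cannot belong to $T$ is exactly the right way to close that gap, and the residual edge set remains balanced by subtraction. The caveat about isolated vertices (the theorem is understood modulo vertices of degree zero, or equivalently connectivity is taken on the vertices meeting at least one edge) is also correctly flagged; the paper implicitly assumes this when applying the theorem to de Bruijn graphs, where every vertex has positive degree.
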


The {\em line graph} $L(G)$ of a digraph $G$ is the digraph whose vertex set corresponds to the edge set of $G$, and $L(G)$ has an edge $e\rightarrow v$  if in $G$, the head of $e$  meets the tail of $v$. For example, the graph to the right  is the line graph of the graph to the left in Figure~\ref{de-Bruijn-B(22)-B(23)}. Clearly, an Eulerian cycle in $G$ gives a Hamiltonian cycle in $L(G)$. 

So, a traditional approach to construct de Bruijn sequences works as follows. Clearly, there is a one-to-one correspondence between directed paths in the de Bruijn graph $B(n,k)$ and words of length $\geq n$ over a $k$-letter alphabet.  For example, in Figure~\ref{corr-fig}, the path $110\rightarrow 101\rightarrow 010\rightarrow 100 \rightarrow 001$ in $B(3,2)$ corresponds to the word 1101001 since the path can be recovered from the word considering consecutive factors of length $3$. But then finding a de Bruijn sequence is equivalent to finding a Hamiltonian cycle in the respective de Bruijn graph, that is, a cycle that goes through each vertex exactly once. 

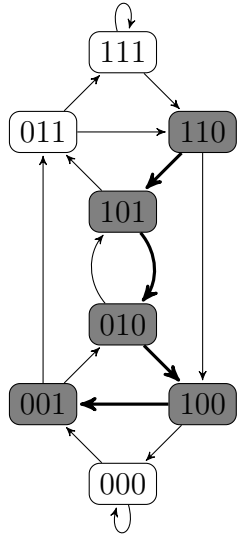
\begin{figure}
\begin{center}
\begin{tikzpicture}[->,>=stealth',shorten >=1pt,node distance=1.5cm,auto,main node/.style={rectangle,rounded corners,draw,align=center}]


\node[main node] (1) {111};
\node[main node] (2) [below left of=1] {011};
\node[main node] (3) [below right of=1, fill=gray] {110};
\node[main node] (4) [below right of=2, fill=gray] {101};
\node[main node] (5) [below of=4, fill=gray] {010};
\node[main node] (6) [below left of=5, fill=gray] {001};
\node[main node] (7) [below right of=5, fill=gray] {100};
\node[main node] (8) [below left of=7] {000};

\path
(1) edge node {} (3);
\path
(8) edge node {} (6);
\path
(4) edge node {} (2)
      edge [bend left=40, very thick] node  {} (5);
\path
(5) edge [very thick] node {} (7)
     edge [bend left=40] node  {} (4);
\path
(3) edge [very thick] node  {} (4)
     edge node  {} (7);
\path
(7) edge [very thick] node  {} (6)
     edge node  {} (8);
\path
(6) edge node  {} (5)
     edge node  {} (2);
\path
(2) edge node  {} (1)
     edge node  {} (3);

\path
(1) edge [loop above] node {} (1);
\path
(8) edge [loop below] node {} (8);

\end{tikzpicture}
\caption{A path in $B(3,2)$}\label{corr-fig}
\end{center}
\end{figure}

One can show that $B(n+1,k)$ is the line graph of $B(n,k)$, and labelling each edge $x_1x_2\cdots x_n\rightarrow x_2x_3\cdots x_{n+1}$ in $B(n,k)$ by $x_1x_2\cdots x_{n+1}$ we obtain labelling of vertices in $B(n+1,k)$ (see Figure~\ref{de-Bruijn-B(22)-B(23)} for an example). Moreover, it is easy to see that $B(n,k)$ is balanced and (strongly) connected. Hence, $B(n,k)$ is Eulerian and $B(n+1,k)$ is Hamiltonian. Constructing an Eulerian cycle in $B(n-1,k)$ using one of the known polynomial algorithms gives a Hamiltonian cycle in $B(n,k)$, which in turn gives a de Bruijn sequence.

\subsection{Martin's algorithm to generate de Bruijn sequences}\label{Martin-intro}
An alternative construction of a de Bruijn sequence is the following greedy algorithm proposed by Martin~\cite{Martin1934} in 1934.

\begin{mdframed}

\medskip
\centerline{\bf Martin's greedy algorithm to generate a de Bruijn sequence}

\medskip
Start with the word $(k-1)^{n-1}$ and then repeatedly apply the following rule: Append the {\em smallest} letter in $\{0,1,\ldots,k-1\}$ so that factors of length $n$ in the resulting word are distinct. Once no more extension is possible, remove the $n-1$ rightmost letters.
\end{mdframed}

For example, for $k=3$ and $n=2$, the steps of the algorithm are: $2\rightarrow 20 \rightarrow 200 \rightarrow 2001 \rightarrow 20010 \rightarrow 200102 \rightarrow 2001021 \rightarrow 20010211 \rightarrow 200102112 \rightarrow 2001021122\rightarrow 200102112$.

\subsection{Martin's algorithm for other combinatorial structures}\label{Martin-other-sec}

The notion of a u-cycle is well-defined for any set of combinatorial objects that admits encoding by words. U-cycles do not always exist. For example, if $k$ does not divide ${n-1\choose k-1}$ then u-cycles for the collection of $k$-sets of an $n$-set do not exist \cite{CDG1992}. A natural question is: In the case when a u-cycle exists, can Martin's algorithm (referred to as a {\em greedy algorithm} because of usage of the smallest possible option at each step) be used to construct a u-cycle? If so, which start can/cannot be used? Martin's algorithm would give an elegant, often easier justifiable way to construct u-cycles without using graph theory. For example, justifying the existence of u-cycles for permutations via Martin's algorithm in~\cite{GKSZ} is much more straightforward than the graph theoretical justification in \cite{CDG1992} (that uses a nontrivial construction in \cite{Hurlbert}); this greedy algorithm is presented in Section~\ref{greedy-perms-subsec} as it has relevance to our paper.

For a non-example, we consider {\em partitions} of the $n$-element set $\{1,2,\ldots,n\}$  discussed in  \cite{CDG1992}. Each such partition can be represented by a word over $\{1,2,\ldots\}$. For example, the word 27254552 represents the partition $\{1,3,8\}$, $\{2\}$, $\{4,6,7\}$, $\{5\}$, because in the 1st, 3rd, and 8th positions we have the same letter, so is the case with the 4th, 6th and 7th positions. It was shown in \cite{CDG1992} that u-cycles for set partitions exist for any $n\geq 4$. However, computer search shows that in the cases of $n=5$ and $n=8$ no start (of lengths 4 and 7, respectively) gives a u-cycle for set partitions if Martin's algorithm is applied. On the other hand, the greedy algorithm works for $n=4$ with the unique possible start (if the smallest possible letters are used lexicographically), namely 124: $124111121122313124$.  
For $n=6$ the algorithm works for two possible starts, 21436 and 35216, e.g.
\begin{small}
\begin{eqnarray} && 
21436111111211112211113211121211122213111213112112113212112212113312121231121232  \notag \\ && 11132211133211143212133112132112133212313232411231211231411232412131432132142231 \notag \\ && 1322421312431512231412532112432511332214335 \notag
\end{eqnarray} 
\end{small}

\noindent
gives one of the u-cycles for $n=6$, and  it does work on a unique possible start 264137 for $n=7$. Developing a theory of when the algorithm works on set partitions and when it does not, and what a possible start can be seems to be a challenging open problem. Note that in the examples we have, possible starts always consist of distinct letters. In either case, our experiments show that it is always possible to construct u-words (rather than u-cycles) for set partitions, proving which remains an open problem: 

\begin{center}
\begin{tabular}{cl}
$n=3$: & 2 possible starts; \\
$n=4$: & 6 possible starts; \\
$n=5$: & 6 possible starts; \\
$n=6$: & 48 possible starts; \\
$n=7$: & 877 possible starts.
\end{tabular}
\end{center}

The main goal of this paper is to introduce a family of universal cycles for multi-dimensional permutations via an application of a greedy algorithm. We will also discuss the graph theoretical approach to show the existence of u-cycles for multi-dimensional permutations. Before introducing multi-dimensional permutations, we note that sometimes the reduction of multi-dimensional objects to one-dimensional counterparts is straightforward. For example, the following theorem is an immediate corollary to the known results/techniques outlined in Sections~\ref{constructing-de-Bruijn-seq} and~\ref{Martin-intro}. 

\begin{thm} There exists a u-cycle for $d$-dimensional $n_1\times n_2\times \cdots\times n_d$ matrices over the alphabet $\{1,2,\ldots,k\}$. \end{thm}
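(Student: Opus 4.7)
The plan is to reduce the problem to the existence of classical de Bruijn sequences, which was recalled in Sections~\ref{constructing-de-Bruijn-seq} and~\ref{Martin-intro}. Set $N=n_1n_2\cdots n_d$, the total number of entries of a $d$-dimensional $n_1\times n_2\times\cdots\times n_d$ matrix. Fix once and for all a bijection between the index set $\{1,\ldots,n_1\}\times\cdots\times\{1,\ldots,n_d\}$ and $\{0,1,\ldots,N-1\}$, for instance the lexicographic order on the indices. This gives a bijection $\Phi$ between the set $\mathcal{M}$ of $d$-dimensional $n_1\times\cdots\times n_d$ matrices over $\{1,\ldots,k\}$ and the set of words of length $N$ over $\{1,\ldots,k\}$: a matrix $M$ is read out coordinate by coordinate in the fixed order to produce $\Phi(M)\in\{1,\ldots,k\}^N$.

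Under $\Phi$, a u-cycle for $\mathcal{M}$ is, by definition, a circular word $w$ over $\{1,\ldots,k\}$ whose length-$N$ windows give each element of $\{1,\ldots,k\}^N$ exactly once. This is precisely a de Bruijn sequence of order $N$ over a $k$-letter alphabet. Such a sequence exists: the de Bruijn graph $B(N-1,k)$ is balanced and strongly connected, hence Eulerian by Theorem~\ref{Eulerian-trail}, so an Eulerian cycle of $B(N-1,k)$ yields a Hamiltonian cycle of $B(N,k)$ and therefore a de Bruijn sequence of order $N$; alternatively, Martin's greedy algorithm of Section~\ref{Martin-intro} produces one explicitly starting from $(k-1)^{N-1}$.

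Pulling back along $\Phi^{-1}$, the resulting circular word of length $k^N$ is a u-cycle for $d$-dimensional $n_1\times\cdots\times n_d$ matrices over $\{1,\ldots,k\}$. There is really no obstacle here: the only observation to make is that the windowing operation on the circular word is carried through $\Phi$ unchanged, because $\Phi$ is applied symbol-by-symbol with a single fixed indexing scheme and does not depend on the position of the window.
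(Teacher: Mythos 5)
Your proof is correct as far as it goes, and it shares the paper's one key idea: encode the matrices bijectively as words and invoke the existence of de Bruijn sequences. But the encoding you choose differs from the paper's, and the difference matters for what object you end up constructing. The paper flattens only the first $d-1$ dimensions: a $d$-dimensional matrix is read as a word of length $n_d$ over the alphabet $\{1,2,\ldots,k^{n_1 n_2\cdots n_{d-1}}\}$ whose letters are the $(d-1)$-dimensional slices, and the u-cycle is a de Bruijn sequence for words of length $n_d$ over that large alphabet. The resulting cyclic object is a sequence of $k^{n_1\cdots n_d}$ slices --- equivalently an $n_1\times\cdots\times n_{d-1}\times k^{n_1\cdots n_d}$ array, cyclic in its last coordinate --- in which every window of $n_d$ consecutive slices is a genuine contiguous $n_1\times\cdots\times n_d$ block. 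Your encoding flattens all $d$ dimensions down to single letters, yielding a de Bruijn sequence of order $N=n_1\cdots n_d$ over $\{1,\ldots,k\}$; this is a valid u-cycle for the set $\Phi(\mathcal{M})=\{1,\ldots,k\}^N$ under the paper's general definition, but a length-$N$ window is a matrix only after decoding through your arbitrary bijection $\Phi$, it is not a contiguous sub-block of any multi-dimensional structure, and the identical argument would prove ``existence of a u-cycle'' for any finite set in bijection with $A^N$. So both are correct reductions to de Bruijn sequences; the paper's choice buys a u-cycle that retains the multi-dimensional shape of the objects (cyclic only along the last axis), whereas yours is more uniform but forgets the matrix structure entirely. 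If you intend the fully flattened reading, say so explicitly, since the strength of the theorem depends on which encoding is fixed.
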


\begin{proof} Think of $d$-dimensional matrices as words of length $n_d$ over the alphabet $$\{1,2,\ldots,k^{n_1\cdot n_2\cdot \ldots\cdot n_{d-1}}\},$$ where we label lexicographically all $(d-1)$-dimensional matrices. But then we deal with de Bruijn sequences whose existence is discussed in Sections~\ref{constructing-de-Bruijn-seq} and~\ref{Martin-intro}.\end{proof}

In the case of multi-dimensional permutations, reduction to known results on permutations is not so straightforward: we cannot simply refer to the existence of u-cycles theorems (and even to the theorems on products of u-cycles \cite{DiaconisGraham}). 

\subsection{$d$-dimensional permutations.}  Let $\pi = \pi_1 \pi_2 \dots \pi_n$ be a permutation of length $n$ ($n$-permutation) in the symmetric group $S_n$.  As written, $\pi$ is in one-line notation, while its two-line notation is \[\pi=\left(\begin{array}{cccc}1&2&\dots&n\\ \pi_1 & \pi_2 & \dots & \pi_n\\ \end{array}\right).\]

A \textit{$d$-dimensional permutation $\Pi$ of length $n$} (or \textit{$d$-dimensional $n$-permutation}) studied e.g.\ in \cite{AKLPT}, is an ordered $(d-1)$-tuple $(\pi^2,\pi^3, \dots , \pi^d)$ of $n$-permutations where for each $2\leq i\leq d$, $\pi^i=\pi_1^i\pi_2^i\dots\pi_n^i\in S_n$. For example, $(231,312,231)$ is a 4-dimensional permutation of length 3. We let $S^d_n$ denote the set of $d$-dimensional permutations of length $n$. Note that $S^2_n$ corresponds naturally to $S_n$, hence ``usual'' permutations are 2-dimensional permutations.  We also generalize two-line notation to $d$-line notation and we write
\renewcommand{\arraystretch}{1.25}
\[\Pi=\left(\begin{array}{cccc}1&2&\dots&n\\
\pi^2_1 & \pi^2_2 & \dots & \pi^2_n\\
 \pi^3_1 & \pi^3_2 & \dots & \pi^3_n\\
  \vdots &  & \dots & \vdots\\
   \pi^d_1 & \pi^d_2 & \dots & \pi^d_n\\ \end{array}\right)=\left(\begin{array}{cccc}\pi^1_1&\pi^1_2&\dots&\pi^1_n\\
\pi^2_1 & \pi^2_2 & \dots & \pi^2_n\\
 \pi^3_1 & \pi^3_2 & \dots & \pi^3_n\\
  \vdots &  & \dots & \vdots\\
   \pi^d_1 & \pi^d_2 & \dots & \pi^d_n\\ \end{array}\right),\]
so that $\Pi$ corresponds naturally to a $d\times n$ matrix.  It is also helpful to let $\pi^1$ denote the permutation $12\dots n$ so that we can succinctly write \[\Pi=\left\{\pi^i_j\right\}_{\begin{subarray}{l} 1 \leq i\leq d \\ 1\le j\le n\end{subarray}}.\]  Motivated by two-line notation, we say that the columns of this matrix represent the \textit{elements of $\Pi$} which we denote by  $\Pi_i$.  In particular, we write $\Pi=\Pi_1\Pi_2\dots \Pi_n$ where $\Pi_i$ is the $d$-tuple $(i,\pi^2_i, \pi^3_i,\ldots, \pi^d_i)^T$.  For example, if $\Pi = (\pi^2, \pi^3)$ is a $3$-dimensional permutation of length $5$ with $\pi^2 = 12534$ and $\pi^3 = 51243$, then we write
\renewcommand{\arraystretch}{1}
\[\Pi=\left(\begin{array}{c}\pi^1\\ \pi^2\\ \pi^3 \end{array}\right)=\left(\begin{array}{ccccc}
1&2&3&4&5\\
1&2&5&3&4\\
5&1&2&4&3\\
\end{array}\right),\]
or $\Pi=\Pi_1\Pi_2\Pi_3\Pi_4\Pi_5$ where $\Pi_1=(1,1,5)^T$, $\Pi_2=(2,2,1)^T$, $\Pi_3=(3,5,2)^T$, $\Pi_4=(4,3,4)^T$, $\Pi_5=(5,4,3)^T$. 

\subsection{A greedy algorithm to construct u-cycles for permutations}\label{greedy-perms-subsec}

For a permutation, or word, $\pi$, the {\em reduced form} of $\pi$, denoted $\red(\pi)$, is obtained by replacing the $i$-th smallest element in $\pi$ by $i$. For example, $\red(4285)=2143$. Any $i$ consecutive letters of a word or permutation $w$ form a {\em factor} of $w$. If $w$ is a cyclic word then a factor can begin at the end of $w$ and end at the beginning of $w$. If $u$ is a factor of $w$, we also say that $w$ {\em covers} $\red(u)$. 

\begin{deff} A word $U'_n$ is a {\em universal word}, or {\em u-word}, for $n$-permutations if $U'_n$ covers, in a non-cyclic way, {\em every} $n$-permutation {\em exactly once}.
\end{deff}

\begin{deff} A cyclic word $U_n$ is a {\em universal cycle}, or {\em u-cycle}, for $n$-permutations if $U_n$ covers {\em every} $n$-permutation {\em exactly once}.
\end{deff}

Let $\pi = \pi_1\pi_2\cdots \pi_m$ be a permutation of $m$ distinct integers. 
Then the {\em $i$-th extension} of $\pi$ to the right, $1\leq i\leq m$, is the permutation  
$$c_b(\pi_1)c_b(\pi_2)\cdots c_b(\pi_m)b,$$ 
where $b$ is the $i$-th smallest element in $\{\pi_1,\pi_2,\ldots,\pi_m\}$, and
$$c_b(x)=
\begin{cases}
x & \mbox{if }x<b, \\
x+1 & \mbox{if }x\geq b.
\end{cases}
$$
The {\em $(m+1)$-st extension} is the permutation $\pi b$, where $b$ is the largest element in $\{\pi_1+1,\pi_2+1,\ldots,\pi_m+1\}$. We call the first extension the {\em smallest extension}, and the $(m+1)$-st extension the {\em largest extension} of~$w$.

The following simple algorithm, suggested and justified in \cite{GKSZ}, produces a universal word $U'_n$ of length $n!+n-1$ for $n$-permutations.

\begin{mdframed}
\medskip

\centerline{\bf The greedy algorithm to construct $U'_n$}

\medskip

Begin with the permutation $U'_{n,0}:=12\cdots(n-1)$.
Suppose that a permutation
$$U'_{n,k}=a_{1}a_{2}\cdots a_{k+n-1}$$
has been constructed for $0 \leq k < n!$, and no two factors in $U'_{n,k}$ of length $n$ are order-isomorphic. Let $i$ be minimal such that no factor of length~$n$ in $U'_{n,k}$ is order-isomorphic to the $i$-th extension of $a_{k+1}a_{k+2}\cdots a_{k+n-1}$, and denote the last element of this extension by~$b$.
Then 
$$U'_{n,k+1}:=c_b(a_1)c_b(a_2)\cdots c_b(a_{k+n-1})b.$$
For some $k^*$, no extension of $U'_{n,k^*}$ will be possible without creating a factor order-isomorphic to a factor in $U'_{n,k}$. The greedy algorithm then terminates and outputs $U'_n:=U'_{n,k^*}$.
\end{mdframed}

\medskip\noindent
For example, the steps of the algorithm for $n=3$ are as follows:
$$12  \rightarrow
 231 \rightarrow
 3421 \rightarrow
  45312 \rightarrow
 564132 \rightarrow  6751324 \rightarrow
 78613245 = U'_3.$$
We note that for each $k$,  $U'_{n,k}$ is a permutation of $\{1,2,\ldots,k+n-1\}$.

The following simple extension of the greedy algorithm, again  suggested and justified in \cite{GKSZ}, turns the u-word for permutations $U'_n$ into a u-cycle for permutations $U_n$.

\begin{mdframed}

\medskip

\centerline{\bf Generating the u-cycle $U_n$ from $U'_n$}

\medskip

Remove the last $n-1$ elements in $U'_n$ and take the reduced form of the resulting sequence to obtain $U_n$.
\end{mdframed}

\medskip\noindent
For example, $U_3$ is given by
$$U'_3=78613245 \rightarrow 786132\rightarrow \red(786132)= 564132=\Pi_3.$$
For another example, $\Pi_4$ is given by
$$(22)(23)(24)(21)(20)(18)(19)3(17)42(16)1695(10)87(13)(11)(12)(15)(14).$$

\subsection{The graph theoretic approach to construct u-cycles for permutations}\label{graph-perm-u-cycles-sec}

The original approach in \cite{CDG1992} to prove the existence of u-cycles for permutations, that is similar in nature (but is much more involved) to that in Section~\ref{constructing-de-Bruijn-seq} to construct de Bruijn sequences, works as follows.  

\begin{figure}
\begin{center}

\begin{tikzpicture}[->,>=stealth',shorten >=1pt,node distance=2cm,auto,main node/.style={circle,draw,align=center}]

\node[main node] (1) {\tiny{123}};
\node[main node] (2) [below of=1] {\tiny{213}};
\node[main node] (3) [right of=1] {\tiny{132}};
\node[main node] (4) [below of=3] {\tiny{312}};
\node[main node] (5) [right of=3] {\tiny{231}};
\node[main node] (6) [below of=5] {\tiny{321}};

\path
(1) edge node {} (3)
      edge [bend left=40] node  {} (5);
\path
(2) edge node {} (1);
\path
(5) edge node {} (6);
\path
(4) edge node {} (1);
\path
(3) edge node {} (6);
\path
(6) edge node {} (4)
      edge [bend left=40] node  {} (2);

\path
(2) edge [bend left=5] node  {} (3);
\path
(3) edge [bend left=5] node  {} (2);

\path
(3) edge [bend left=10] node  {} (4);
\path
(4) edge [bend left=10] node  {} (3);

\path
(4) edge [bend left=5] node  {} (5);
\path
(5) edge [bend left=5] node  {} (4);

\path
(2) edge [bend left=5] node  {} (5);
\path
(5) edge [bend left=5] node  {} (2);

\path
(1) edge [loop above] node {} (1);
\path
(6) edge [loop below] node {} (6);

\end{tikzpicture}

\end{center}

\vspace{-0.8cm}
\caption{The graph $P(3)$}\label{graph-P3}
\end{figure}
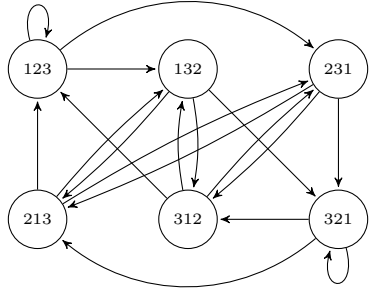

The {\em graph of overlapping permutations} $P(n)$ is defined in a way analogous to the de Bruijn graph $B(n,k)$. However, instead of requiring the tail of one permutation to equal the head of another for them to be connected by an edge, we require that the head and tail in question have their elements appear in the same relative order. Hence, the vertex set of $P(n)$ is the set of all $n!$ permutations of $\{1,2,\ldots,n\}$, and there is an edge $x_1x_2\cdots x_n\rightarrow y_1y_2\cdots y_n$ if and only if, for $2\leq i<j\leq n$, $x_i<x_j$ if and only if $y_{i-1}<y_{j-1}$. The graph $P(3)$ can be found in Figure~\ref{graph-P3}.

Via defining the notion of the {\em clustered transition graph} (where the cluster with {\em signature} $x=x_1x_2\cdots x_{n-1}$ is the set of all $n$-permutations whose first $n-1$ elements are order-isomorphic to $x$) one can prove that $P(n)$ is Hamiltonian; see  \cite{CDG1992} for details. For example, a Hamiltonian cycle $C$ for $P(3)$ is
$$132\rightarrow 312\rightarrow 123\rightarrow 231\rightarrow 321\rightarrow 213\rightarrow 132.$$

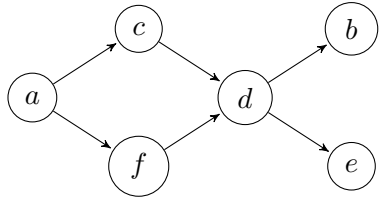
\begin{figure}
\begin{center}

\begin{tikzpicture}[->,>=stealth',shorten >=1pt,node distance=2cm,auto,main node/.style={circle,draw,align=center}]

\node[main node] (1) {{\small $a$}};
\node[main node] (2) [above right of=1,yshift=-0.5cm] {{\small $c$}};
\node[main node] (3) [below right of=1,yshift=0.5cm] {{\small $f$}};
\node[main node] (4) [below right of=2,yshift=0.5cm] {{\small $d$}};
\node[main node] (5) [above right of=4,yshift=-0.5cm] {{\small $b$}};
\node[main node] (6) [below right of=4,yshift=0.5cm] {{\small $e$}};

\path
(1) edge node {} (2);
\path
(1) edge node {} (3);
\path
(2) edge node {} (4);
\path
(3) edge node {} (4);
\path
(4) edge node {} (5);
\path
(4) edge node {} (6);

\end{tikzpicture}

\end{center}

\vspace{-0.8cm}
\caption{The relations between the elements in $U_3$, where $i\rightarrow j$ denotes the requirement that $i<j$}\label{poset-P3}
\end{figure}

Translating the Hamiltonian cycle $C$ in $P(3)$ into a u-cycle for $3$-permutations begins with assigning (as of yet) undetermined values for the potential u-cycle as $U_3=abcdef$ and building the partially ordered set (poset) in Figure~\ref{poset-P3} corresponding to the relations between the elements in $U_3$. For example, the first permutation in $C$ is 132 and hence $a<c<b$; the second permutation in $C$ is 312 and hence $c<d<b$, etc. Taking a {\em linear extension} of the poset into $\{1,2,\ldots,N\}$, that is, mapping $\{a,b,c,d,e,f\}$ into $\{1,2,\ldots,N\}$ for a suitable $N$  (in this example, $N=4$) we obtain the desired u-cycle $U_3=142342$. 

Even though obtaining a Hamiltonian cycle $C$ in $P(n)$ is straightforward, arguing that the implied ordering on the values is in fact a partial order, i.e.\ has no cycles, is a difficult task. While the authors of \cite{CDG1992}  believe that this to be the case for {\em any} Hamiltonian cycle in $P(n)$, they refer to \cite{Hurlbert} where a particular construction of $C$ is offered that guarantees the ordering  be a partial order. Since  \cite{Hurlbert} is not easily accessible, we sketch here the idea of the construction to be referred by us to in Section~\ref{last-sec}. 

First, we introduce the following notions. The {\em suffix} of a permutation is all but the first element. The {\em prefix} of a permutation is all but the last element. If we group $n$-permutations into groups whose prefixes are order-isomorphic (that is, are the same in the reduced form), then every group has a permutation ending in $i$ for each $i$. To generate a group we use the method of {\em displacement}, which is merely the sequential transposition of $i+1$ and $i$ and generates the lexicographical ordering while preserving the prefix order-isomorphism.  A {\em key} for $n$-permutations is a permutation beginning with a 1 and ending with an $n$. 

A {\em head} is a permutation beginning with a 1. The heads for a given key are generated by displacement and are hence all heads whose prefixes are order-isomorphic to the prefix of that key (we do not perform the last displacement, i.e.\ the transposition (12)).  {\em Rotations} of the heads are cyclic shifts of the heads. For example, given head $1abc$ ($n=4$) its rotations are itself, $abc1$, $bc1a$, and $c1ab$, in that order. Hence, each $n$-permutation is a rotation of one of the $(n-2)!$ heads, which in turn is the displacement of one of the $(n-2)!$ keys.  

\begin{figure}
\begin{center}
\begin{tabular}{cc}
123 & 132 \\
231 & 321 \\
312 & 213
\end{tabular}
\end{center}
\caption{Permutation list for $S_3$}\label{perm-list-S3}
\end{figure}

The only key for $S_3$ is the permutation 123, whereas for $S_4$ we have 1234 and 1324. The only other head for $S_3$ is 132, whereas 1234 includes 1243 and 1342, and 1324 includes 1423 and 1432. Figures~\ref{perm-list-S3} and~\ref{perm-list-S4} show the two lists generated so far. The lists are read by column, top to bottom, left to right.

\begin{figure}
\begin{center}
\begin{tabular}{cccccc}
1234 & 1243 & 1342 & 1324 & 1423 & 1432 \\
2341 & 2431& 3421& 3241& 4231& 4321\\
3412 & 4312 & 4213 & 2413 & 2314 & 3214 \\
4123 & 3124 & \underline{2134} & 4132 & 3142 & \underline{2143}\\
\end{tabular}
\end{center}
\caption{Temprorary list for $S_4$}\label{perm-list-S4}
\end{figure}

Notice that the heads for $S_3$ are the prefixes of the keys of $S_4$. This holds in general that the prefixes of the keys for $S_n$ are the heads for $S_{n-1}$, and in the same order. This recursive property is important in understanding the switching described in \cite{Hurlbert} and presented here properly only for $S_4$, which is sufficient for our purposes. The list for $S_4$ comes in two parts, distinguished by the two keys. Each part is indeed a cycle by itself, as is easy to see. The suffix of the last rotation of a head is precisely the prefix of the head and hence is order-isomorphic to the other heads under its key. Therefore, we obtain two cycles which we must join together. This is possible by switching the two underlined permutations $a=2134$ and $b=2143$. Letting $a'=4213$ and $b'=3214$ we see that in $P(4)$ we find the edges $a'\rightarrow b$ and $a\rightarrow b'$, so this switch does produce a Hamiltonian cycle. 

For the general switching algorithm, there are $(n-3)$ levels of joining, the success of the final one being dependent upon the construction for $S_3$; see  \cite{Hurlbert} for details, which we omit here. It can be argued that such a construction produces a partially ordered set as it has a built in {\em breaker}, the permutation $12\cdots n$, in the sense described in \cite{Hurlbert}.

For another paper involving a graph theoretic approach to construct u-cycles for permutations, see~\cite{Johnson}. There, similarly to the approach above, short cycles in $P(n)$  are created, and  a suitable approach to join these cycles is discussed that results in a u-cycle for $n$-permutations requiring $n+1$ distinct letters (the minimum possible  number of distinct letters). 

\section{Generating u-cycles for $d$-dimensional permutations}\label{u-cycle-sec}

In this section, we usually do not present the top row in multi-dimensional permutations, which is always the respective increasing permutation. Also, for a matrix $M$ in which each row has distinct elements (but there can be equal elements in different rows), the reduced form of $M$, denoted by $\red(M)$, is obtained by taking the reduced form of each row.  

\begin{deff} A matrix $U'_{d;n}$ with $d-1$ rows is a {\em universal word}, or {\em u-word}, for $d$-dimensional $n$-permutations if {\em each} $d$-dimensional permutation (without the top row) can be found non-cyclically in $U'_{d;n}$ {\em exactly once} as $n$ consecutive columns in the reduced form.
\end{deff}

For example, $U'_{3,2}=\left(\begin{array}{ccccc} 5 & 4 & 1 & 2 & 3 \\ 5 & 1 & 4 & 2 & 3 \end{array}\right)$. Indeed, the first two columns cover the permutation $\left(\begin{array}{cc} 1 & 2 \\ 2 & 1 \\ 2 & 1 \end{array}\right)$, columns 2 and 3 cover the permutation $\left(\begin{array}{cc} 1 & 2 \\ 2 & 1 \\ 1 & 2 \end{array}\right)$, and so on.

\begin{deff} Allowing in the definition of $U'_{d;n}$  consecutive columns to be considered cyclically, we define a {\em universal cycle}, or {\em u-cycle}, $U_{d;n}$ for $d$-dimensional $n$-permutations.
\end{deff}

For example, $U_{3,2}=\left(\begin{array}{cccc} 4 & 3 & 1 & 2 \\ 4 & 1 & 3 & 2  \end{array}\right)$. 

For a $d$-dimensional permutation $\Pi_1\Pi_2\dots \Pi_n$ where $\Pi_i$ is the $d$-tuple $(\pi^2_i, \pi^3_i,\ldots, \pi^d_i)^T$, we say that $\Pi_i<\Pi_j$ if $(\pi^2_i, \pi^3_i,\ldots, \pi^d_i)^T$ is lexicographically smaller than $(\pi^2_j, \pi^3_j,\ldots, \pi^d_j)^T$, that is, if $\pi^2_i<\pi^2_j$ or $\pi^2_i=\pi^2_j$ and $\pi^3_i<\pi^3_j$ or $\pi^2_i=\pi^2_j$ and $\pi^3_i=\pi^3_j$ and $\pi^4_i<\pi^4_j$, and so on.

To construct $U'_{d;n}$ and $U_{d;n}$, we mimic the steps in Section~\ref{greedy-perms-subsec} to construct  $U'_n$ and $U_n$, respectively. To proceed, we need the notion of $i$-th extension for a $d$-dimensional permutation to be introduced next. 

Let $\Pi = \Pi_1\Pi_2\cdots \Pi_m$ be a matrix with $d-1$ rows where each row contains $m$ distinct integers and columns are distinct.  Also, let $(i_1,i_2,\ldots,i_{d-1})^T$ be the $i$-th lexicographically smallest element in $\{(x_1,x_2,\ldots,x_{d-1})^T:1\leq x_i\leq m+1,1\leq i\leq d-1\}$.
Then the {\em $i$-th extension} of $\Pi$ to the right, $1\leq i\leq (m+1)^{d-1}$, is the matrix $C_B(\Pi)B$ where 
\begin{itemize}
\item $B=(b_1,b_2,\ldots,b_{d-1})^T$ and $b_j$ is the $i_j$-th smallest element in row $j$ in $\Pi$ defined to be one more than the maximum element in row $j$ if $i_j=m+1$, and
\item  $C_B(\Pi)$ is obtained from $\Pi$ by increasing by one every element in row $j$ that is $\geq b_j$ and leaving all other elements unchanged. 
\end{itemize}
We call the first extension the {\em smallest extension}, and the $(m+1)^{d-1}$-st extension the {\em largest extension} of~$\Pi$. 

For example, the second (given by $(1,1,2)^T$) and 31-st (given by $(2,4,3)^T$) extensions of $\left(\begin{array}{ccc} 4 & 2 & 5 \\ 2 & 6 & 1 \\ 4 & 1 & 2 \end{array}\right)$ are $\left(\begin{array}{cccc} 5 & 3 & 6 & 2 \\ 3 & 7 & 2 & 1 \\ 5 & 1 & 3 & 2 \end{array}\right)$ and $\left(\begin{array}{cccc} 5 & 2 & 6 & 4 \\ 2 & 6 & 1 & 7 \\ 5 & 1 & 2 & 4 \end{array}\right)$, respectively. 

In what follows, $I_{d;n}:=\left(\begin{array}{c}12\cdots n \\ \cdots \\ 12\cdots n\end{array}\right)$ is a matrix with $d-1$ rows.

\begin{mdframed}
\medskip

\centerline{\bf The greedy algorithm to construct $U'_{d;n}$}

\medskip

Begin with the matrix  $U'_{d;n,0}:=I_{d;n-1}$.
Suppose that a  $(d-1)$-dimensional permutation (with columns denoted by $\Pi_i$)
$$U'_{d;n,k}=\Pi_{1}\Pi_{2}\cdots \Pi_{k+n-1}$$
has been constructed for $0 \leq k < (n!)^{d-1}$, and no two of sets of $n$ consecutive columns (in reduced form) in $U'_{d;n,k}$ result in the same $d$-dimensional permutations. Let $i$ be minimal such that no $n$ consecutive rows in $U'_{d;n,k}$ in reduced form is the same as the $i$-th extension of $\Pi_{k+1}\Pi_{k+2}\cdots \Pi_{k+n-1}$ in reduced form, and denote the last element of this extension by~$B$.
Then 
$$U'_{d;n,k+1}:=C_B(\Pi_1\Pi_2\cdots \Pi_{k+n-1})B.$$
For some $k^*$, no extension of $U'_{d;n,k^*}$ will be possible without creating a permutation already covered by $U'_{d;n,k}$. The greedy algorithm then terminates and outputs $U'_{d;n}:=U'_{d;n,k^*}$.
\end{mdframed}

\begin{mdframed}

\medskip

\centerline{\bf Generating the u-cycle $U_{d;n}$ from $U'_{d;n}$}

\medskip

Remove the last $n-1$ elements in $U'_{d;n}$ and take the reduced form of the resulting matrix to obtain $U_{d;n}$.
\end{mdframed}

The examples of $U'_{3;2}$ and  $U_{3;2}$  above are obtained by implementing the respective algorithms. For another example, beginning with $I_{3;2}={12\choose 12}  $, we obtain the following u-cycle $U_{3;3}$:

\begin{center}
\begin{tiny}
$\left( \begin{array}{cccccccccccccccccccccccccccccccccccc}  
35 & \hspace{-2mm} 36 & \hspace{-2mm} 34 & \hspace{-2mm} 33 & \hspace{-2mm} 32 & \hspace{-2mm} 31 & \hspace{-2mm} 30 & \hspace{-2mm} 26 & \hspace{-2mm} 29 & \hspace{-2mm} 25 & \hspace{-2mm} 28 & \hspace{-2mm} 24 & \hspace{-2mm} 27 & \hspace{-2mm} 23 & \hspace{-2mm} 3 & \hspace{-2mm} 22 & \hspace{-2mm} 2 & \hspace{-2mm} 21 & \hspace{-2mm} 1 & \hspace{-2mm} 5 & \hspace{-2mm} 4 & \hspace{-2mm} 7 & \hspace{-2mm} 6 & \hspace{-2mm} 9 & \hspace{-2mm} 8 & \hspace{-2mm} 11 & \hspace{-2mm} 10 & \hspace{-2mm} 13 & \hspace{-2mm} 12 & \hspace{-2mm} 15 & \hspace{-2mm} 14 & \hspace{-2mm} 16 & \hspace{-2mm} 17 & \hspace{-2mm} 18 & \hspace{-2mm} 19 & \hspace{-2mm} 20 \\
35 & \hspace{-2mm} 36 & \hspace{-2mm} 34 & \hspace{-2mm} 2 & \hspace{-2mm} 3 & \hspace{-2mm} 1 & \hspace{-2mm} 33 & \hspace{-2mm} 32 & \hspace{-2mm} 31 & \hspace{-2mm} 4 & \hspace{-2mm} 6 & \hspace{-2mm} 5 & \hspace{-2mm} 7 & \hspace{-2mm} 9 & \hspace{-2mm} 30 & \hspace{-2mm} 8 & \hspace{-2mm} 11 & \hspace{-2mm} 10 & \hspace{-2mm} 28 & \hspace{-2mm} 29 & \hspace{-2mm} 27 & \hspace{-2mm} 26 & \hspace{-2mm} 12 & \hspace{-2mm} 14 & \hspace{-2mm} 13 & \hspace{-2mm} 16 & \hspace{-2mm} 25 & \hspace{-2mm} 15 & \hspace{-2mm} 18 & \hspace{-2mm} 17 & \hspace{-2mm} 23 & \hspace{-2mm} 24 & \hspace{-2mm} 22 & \hspace{-2mm} 19 & \hspace{-2mm} 21 & \hspace{-2mm} 20 
\end{array}
\right)$
\end{tiny}
\end{center}

We will next justify that the algorithms work. Our justification follows closely the steps in \cite{GKSZ,Martin1934}.

For $U'_{d;n,k} = \Pi_{1}\Pi_{2}\cdots \Pi_{k+n-1}$, set
$$\sigma_{k} := \red(\Pi_{k}\Pi_{k+1}\cdots \Pi_{k+n-1}), \qquad \sigma'_{k} := \red(\Pi_{k+1}\Pi_{k+2}\cdots \Pi_{k+n-1}),$$
$$J_{k} := |\{j\le k:\, \sigma'_{j} = \sigma'_{k}\}|,$$
i.e.\ $J_{k}$ is the number of occurrences in reduced form of the $d$-dimensional $(n-1)$-permutation $\sigma'_{k}$ in~$U'_{d;n,k}$.
By description of the algorithm, all $i$-th extensions of~$\sigma'_{k}$ with $i < J_k$ occur in $U'_{d;n,k}$, and the $J_k$-th extension of $\sigma'_{k}$ does not occur in~$U'_{d;n,k}$.
Therefore, the greedy algorithm terminates at~$k$ if and only if $J_k = n^{d-1}$.
If $J_k < n^{d-1}$, then $\sigma_{k+1}$ is the $J_k$-th extension of $\sigma'_{k}$.

\begin{lem}\label{lem2}
The greedy algorithm terminates at~$k$ if and only if $\sigma_{k} {\,=\,}I_{d;n}$.
\end{lem}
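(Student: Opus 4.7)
The plan is to set up a simple balance identity on the partial word $U'_{d;n,k}=\Pi_1\cdots\Pi_{k+n-1}$ and then combine it with the ``smallest unused extension'' greedy rule. For each $(n-1)$-permutation $\tau$, I define
$$P_k(\tau)=|\{j:1\le j\le k,\ \red(\Pi_j\cdots\Pi_{j+n-2})=\tau\}|,\quad S_k(\tau)=|\{j:1\le j\le k,\ \sigma'_j=\tau\}|,$$
the number of $n$-windows in $U'_{d;n,k}$ whose prefix (resp.\ suffix) reduces to $\tau$. Telescoping the indicator $[\sigma'_j=\tau]$ over the shift $j\mapsto j-1$ yields
$$S_k(\tau)-P_k(\tau)=[\sigma'_k=\tau]-[\sigma'_0=\tau],$$
and the initialisation $U'_{d;n,0}=I_{d;n-1}$ gives $\sigma'_0=I_{d;n-1}$. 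Because the $\sigma_j$'s are pairwise distinct by design and there are exactly $n^{d-1}$ $n$-permutations with any fixed prefix or suffix reduction $\tau$, both counts satisfy $P_k(\tau),\,S_k(\tau)\le n^{d-1}$. Moreover, since the $j$-th step extends $\sigma'_{j-1}$, $P_k(\tau)$ is precisely the number of extensions of $\tau$ already used, and termination at $k$ is the condition $P_k(\sigma'_k)=n^{d-1}$.

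For the forward direction, I will assume the algorithm terminates at $k$. Substituting $\tau=\sigma'_k$ into the balance identity gives $S_k(\sigma'_k)=n^{d-1}+1-[\sigma'_0=\sigma'_k]$; combined with $S_k(\sigma'_k)\le n^{d-1}$, this forces $\sigma'_k=\sigma'_0=I_{d;n-1}$. Therefore all $n^{d-1}$ extensions of $I_{d;n-1}$ appear among $\sigma_1,\dots,\sigma_k$; in particular the lex-largest one, which by a direct check of the definition (with $B=(n,\dots,n)^T$ and $C_B(I_{d;n-1})=I_{d;n-1}$) is exactly $I_{d;n}$. So $I_{d;n}=\sigma_m$ for some $m\le k$. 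By the greedy ``smallest unused'' rule, the $(n^{d-1})$-th extension of $I_{d;n-1}$ can be chosen only after all smaller ones are used, so $P_m(I_{d;n-1})=n^{d-1}$; since $\sigma'_m=I_{d;n-1}$, this means the algorithm already terminates at $m$, forcing $m=k$ and $\sigma_k=I_{d;n}$.

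The converse reverses this: if $\sigma_k=I_{d;n}$, then both $\sigma'_{k-1}$ and $\sigma'_k$ equal $I_{d;n-1}$, and the greedy choice of the largest extension at step $k$ means all smaller extensions of $I_{d;n-1}$ were already used, so $P_k(I_{d;n-1})=n^{d-1}$ and the algorithm cannot proceed. The hard part will be the bookkeeping around the balance identity together with the concrete verification that $I_{d;n}$ is the lex-largest extension of $I_{d;n-1}$; once these are in place, the argument closely mirrors Martin's proof~\cite{Martin1934} and its one-dimensional permutation analogue in~\cite{GKSZ}.
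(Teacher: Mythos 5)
Your proof is correct and follows essentially the same route as the paper's: both reduce termination to comparing how often $\sigma'_k$ occurs as a prefix versus a suffix of the length-$n$ windows, using that the windows are pairwise distinct, that there are exactly $n^{d-1}$ permutations with a given prefix (or suffix) reduction, and that only the initial window $\sigma'_0=I_{d;n-1}$ lacks a preceding column. Your telescoped balance identity $S_k(\tau)-P_k(\tau)=[\sigma'_k=\tau]-[\sigma'_0=\tau]$ is simply a cleaner formalization of the paper's occurrence count $J_k$ (and it correctly handles the bookkeeping that the paper's stated criterion ``$J_k=n^{d-1}$'' glosses over), while your direct argument for the forward implication replaces the paper's two-case contrapositive without changing the underlying idea.
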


\begin{proof}
If $\sigma_{k} = I_{d;n}$, then $\sigma'_{k-1} = \sigma'_{k} = I_{d;n-1}$ and $J_{k-1} = n^{d-1}-1$. Thus, $J_k = n^{d-1}$, and the greedy algorithm terminates at~$k$.

Conversely, assume that $\sigma_{k} \ne  I_{d;n}$ and by the above, $\sigma_{j} \ne I_{d;n}$ for all $j \le k$. If $\sigma'_{k} \ne I_{d;n-1}$ then we have $\sigma'_{0} \ne \sigma'_{k}$, and so every occurrence of $\sigma'_{k}$ is preceeded by an element (column)  and hence $J_k \le n^{d-1}$. If $\sigma'_{k} = I_{d;n-1}$, then $\sigma'_{0} = \sigma'_{k}$ but because $\sigma_{j} = I_{d;n}$ is not possible, we have $\sigma'_j = \sigma'_k$ for at most $n^{d-1}-1$ different indices $j \ge 1$, which again gives that $J_k \leq n^{d-1}$. Therefore, the greedy algorithm does not terminate at~$k$ if $\sigma_{k} \ne I_{d;n}$ because $< n^{d-1}$ extensions of $\sigma'_{k}$ to the right are used in $U'_{d;n,k}$.
\end{proof}

\begin{lem}\label{lem3}
$U'_{n,d}$ covers all $d$-dimensional $n$-permutations.
\end{lem}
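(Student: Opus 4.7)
The plan is to follow the strategy of GKSZ for the 1-dimensional case (outlined in Section~\ref{greedy-perms-subsec}) and generalize it to multiple dimensions, using Lemma~\ref{lem2} as the starting point. By Lemma~\ref{lem2}, the greedy algorithm terminates at some step $k^*$ with $\sigma_{k^*} = I_{d;n}$, and consequently $\sigma'_{k^*} = I_{d;n-1}$ with all $n^{d-1}$ right-extensions of $I_{d;n-1}$ having been used as $n$-windows in $U'_{d;n,k^*}$. The greedy rule forces the $k^*$ windows of length $n$ in $U'_{d;n,k^*}$ to be pairwise distinct $d$-dimensional $n$-permutations in reduced form. Since there are $(n!)^{d-1}$ such permutations, the lemma reduces to establishing $k^* = (n!)^{d-1}$.

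I would establish this equality by proving two auxiliary claims. Claim~(A): for every $d$-dimensional $(n-1)$-permutation $\sigma$ that appears as an $(n-1)$-window of $U'_{d;n,k^*}$, all $n^{d-1}$ right-extensions of $\sigma$ appear as $n$-windows. Claim~(B): every $d$-dimensional $(n-1)$-permutation appears as an $(n-1)$-window of $U'_{d;n,k^*}$. Together, (A) and (B) yield $((n-1)!)^{d-1}\cdot n^{d-1} = (n!)^{d-1}$ distinct $n$-windows, matching $k^*$.

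For Claim~(A), my first step is to verify by induction on $k$, using the definition of the greedy rule, that at every stage the right-extensions of $\sigma'_k$ already used in $U'_{d;n,k}$ form an initial segment of the extension ordering, so that the used extensions are precisely the first $J_k - 1$ extensions of $\sigma'_k$. The base case $\sigma = I_{d;n-1}$ is the content of Lemma~\ref{lem2}. For a general visited $\sigma$, I would interpret the algorithm as a closed walk in the directed graph $P(d;n)$ whose vertices are $d$-dimensional $(n-1)$-permutations and whose edges are $d$-dimensional $n$-permutations, each edge running from the reduced left $(n-1)$-part to the reduced right $(n-1)$-part of the corresponding $n$-permutation. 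Since the walk starts and ends at $I_{d;n-1}$, the used-edges subgraph of $P(d;n)$ is balanced at every visited vertex; combining this balance with the initial-segment property forces the number of occurrences $m_\sigma$ to equal $n^{d-1}$ for $\sigma \neq I_{d;n-1}$ (and $n^{d-1}+1$ for $I_{d;n-1}$ itself), hence all right-extensions of $\sigma$ are used. For Claim~(B), I would invoke the strong connectivity of $P(d;n)$ (an analogue of the 1-dimensional graph of overlapping permutations from Section~\ref{graph-perm-u-cycles-sec}) together with (A): the set of visited vertices is then closed under out-neighbours, so strong connectivity forces it to be the whole vertex set.

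The main obstacle, mirroring the 1-dimensional case, is the propagation step underlying Claim~(A). The greedy rule directly controls only the order in which right-extensions are taken at each visit of $\sigma'_k$, not how often a given vertex $\sigma \neq I_{d;n-1}$ is visited. To transport the full usage guaranteed at $I_{d;n-1}$ by Lemma~\ref{lem2} to every other visited vertex, one has to simultaneously exploit the in-/out-degree balance inside the used-edges subgraph, the ``smallest unused first'' greedy property, and the cycle structure of $P(d;n)$; handling the interaction of these ingredients in the multi-dimensional setting, where each vertex has $n^{d-1}$ out-edges rather than $n$, is the delicate point that must be carried out with care.
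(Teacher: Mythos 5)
Your reduction (all $n$-windows are distinct, so it suffices to show $k^*=(n!)^{d-1}$, split into Claims (A) and (B)) is sound in outline, but there is a genuine gap at the centre: Claim (A) \emph{is} the whole content of the lemma, and the mechanism you offer for it --- balance of the closed walk in the transition graph combined with the initial-segment property of the greedy rule --- does not prove it. A closed walk from $I_{d;n-1}$ to itself is balanced at every vertex regardless of how few times it visits each vertex: balance only says that used in-degree equals used out-degree there, which is perfectly consistent with a vertex $\sigma\neq I_{d;n-1}$ being entered and left, say, twice, with only the first two of its $n^{d-1}$ extensions ever used. Nothing in ``balance plus initial segment'' excludes this, and your final paragraph in effect concedes the point by deferring ``the delicate point'' --- but that delicate point is the lemma. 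Your Claim (B) also silently invokes strong connectivity of the transition graph, which you do not establish.

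The missing idea (and the paper's actual proof) is a propagation argument along suffixes rather than a global Eulerian argument. Suppose $\Pi_1\Pi_2\cdots\Pi_n$ is uncovered and let $Y=\red(\Pi_2\cdots\Pi_n)$. Each occurrence of $Y$ as $\sigma'_j$ with $j\ge 1$ is preceded by a column and yields a distinct covered $n$-permutation whose reduced suffix is $Y$; since one of the $n^{d-1}$ such permutations is uncovered, $Y$ is left-extended at most $n^{d-1}-1$ times, so by the greedy ordering the \emph{largest} extension $Y B_n$ (with $B_n=(n,\dots,n)^T$) is itself uncovered. Its reduced suffix is a new window, and iterating this step $n-1$ times along the chain of appended all-maximal columns produces the uncovered permutation $B_1B_2\cdots B_n=I_{d;n}$, contradicting Lemma~\ref{lem2}. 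This single chain argument replaces both of your claims at once and supplies the connectivity certificate (an explicit path to $I_{d;n}$) that Claim (B) was missing; without some version of it, your proposal does not close.
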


\begin{proof}
We proceed by contradiction.
Suppose that a permutation $\Pi_1 \Pi_2 \cdots \Pi_{n}$ is not covered by $U'_{d;n}$. 
Then $\Pi_2 \cdots \Pi_{n}$ is covered at most $n^{d-1}$ times, hence $\red(\Pi_2 \cdots \Pi_{n})B_n$, where $B_n=(n,n,\ldots,n)^T$ is the largest extension of $\Pi_2 \cdots \Pi_{n}$, is not covered. 
More generally, for $1 \le k \le n$, if $\red(\Pi_k \cdots \Pi_{n}) B_{n-k+2} \cdots B_n$ is not covered, then 
$$\red(\red(\Pi_{k+1} \cdots \Pi_{n}) B_{n-k+2} \cdots B_{n-1})B_n = \red(\Pi_{k+1} \cdots \Pi_{n}) B_{n-k+1} \cdots B_{n-1}B_n$$
is not covered, where, for example, $\red(\Pi_k \cdots \Pi_{n}) B_{n-k+2} \cdots B_n$ denotes $k-1$ applications of the largest extension to $\red(\Pi_k \cdots \Pi_{n})$.
We obtain that $U'_{d;n}$ does not cover the permutation $I_{d;n}$ contradicting Lemma~\ref{lem2}.
\end{proof}

By the nature of the greedy algorithm, $U'_{d;n}$ cannot cover a permutation more than once. By Lemma~\ref{lem3}, $U'_{d;n}$ covers all $n$-permutations. Hence, we proved the following result.

\begin{thm}\label{u-word-thm} 
$U'_{d;n}$ is a u-word for $d$-dimensional $n$-permutations.
\end{thm}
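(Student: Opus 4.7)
The plan is to obtain Theorem~\ref{u-word-thm} as an almost immediate corollary of Lemma~\ref{lem3} together with a separate (and easy) verification that no $d$-dimensional $n$-permutation is covered more than once by $U'_{d;n}$. Since the definition of a u-word for $d$-dimensional $n$-permutations has two parts — covering every such permutation, and covering each of them exactly once — and Lemma~\ref{lem3} already supplies the ``covering'' part, the only remaining task is to rule out repetitions.

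For the no-repetition part I would argue directly from the description of the greedy algorithm. At every step $k+1$, the algorithm chooses the index $i$ to be the \emph{minimal} value for which the $i$-th extension of $\sigma'_k=\red(\Pi_{k+1}\cdots\Pi_{k+n-1})$ does not yet occur in reduced form as a window of $n$ consecutive columns of $U'_{d;n,k}$. By construction the newly appended column $B$ produces a window of $n$ consecutive columns whose reduced form is precisely that $i$-th extension, which was, by the choice of $i$, not previously present. Hence the reduced form of the new window differs from the reduced form of every earlier window of $n$ consecutive columns, and so after termination no $d$-dimensional $n$-permutation is covered more than once by $U'_{d;n}$.

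Combining these two facts gives the theorem: Lemma~\ref{lem3} implies every $d$-dimensional $n$-permutation is covered at least once, the greedy rule implies each is covered at most once, so each is covered exactly once, which is precisely what it means for $U'_{d;n}$ to be a u-word. There is essentially no obstacle at this stage; the real work has already been absorbed into Lemmas~\ref{lem2} and~\ref{lem3}, so the theorem is a short bookkeeping consequence. If anything, the one point worth flagging explicitly is that the greedy choice of $i$ is made among extensions of $\sigma'_k$, and that the unique $n$-column window introduced at step $k+1$ which was not already present in $U'_{d;n,k}$ is exactly the rightmost window — no other window changes its reduced form under the global relabelling $C_B$, since $C_B$ only shifts values and preserves the relative order within each row.
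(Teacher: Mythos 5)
Your proposal is correct and matches the paper's argument exactly: the paper also deduces the theorem from Lemma~\ref{lem3} (covering) together with the observation that the greedy rule, by construction, never creates a window whose reduced form already occurs (no repetition). Your extra remark that $C_B$ preserves the reduced forms of all earlier windows is a worthwhile explicit justification of a step the paper leaves implicit.
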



\begin{thm}\label{u-cylce-d-dim-thm}
 $U_{d;n}$ is a u-cycle for $d$-dimensional $n$-permutations.
\end{thm}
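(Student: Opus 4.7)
The plan is to show $U_{d;n}$, viewed cyclically, covers every $d$-dimensional $n$-permutation exactly once. By Theorem~\ref{u-word-thm}, $U'_{d;n}$ has $L := (n!)^{d-1} + n - 1$ columns, so $U_{d;n}$ has $M := (n!)^{d-1}$ columns and therefore exactly $M$ cyclic length-$n$ windows---matching the number of $d$-dimensional $n$-permutations. It thus suffices to show that all $M$ cyclic windows of $U_{d;n}$ reduce to distinct permutations.

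The first $M - n + 1$ cyclic windows of $U_{d;n}$ are non-wrapping; because row-wise reduction commutes with restriction to consecutive columns, they reduce to the same permutations as the first $M - n + 1$ windows of $U'_{d;n}$, which are pairwise distinct by Theorem~\ref{u-word-thm}. The heart of the argument is the claim that, for each $j \in \{1, \ldots, n-1\}$, the wrap-around window of $U_{d;n}$ at position $M-n+1+j$, cyclically spanning columns $M-n+1+j, \ldots, M, 1, \ldots, j$, reduces to the same permutation as the window of $U'_{d;n}$ at position $M-n+1+j$, spanning columns $M-n+1+j, \ldots, M+j$. Granting this, the $n-1$ wrap-around windows account for exactly the last $n-1$ windows of $U'_{d;n}$, completing the count.

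Working row by row, let $a_k$ denote the entry at column $k$ of a fixed row of $U'_{d;n}$. The claim amounts to
\[
\red(a_{M-n+1+j}, \ldots, a_M, a_1, \ldots, a_j) = \red(a_{M-n+1+j}, \ldots, a_M, a_{M+1}, \ldots, a_{M+j}).
\]
By Lemma~\ref{lem2}, $\sigma_M = I_{d;n}$, so $a_M < a_{M+1} < \cdots < a_L$, and since each shift $c_B$ preserves within-row order, also $a_1 < \cdots < a_{n-1}$. I intend to reduce the claim to two structural invariants of the greedy algorithm: (I) at termination, $\{a_1, \ldots, a_{n-1}\} = \{M+1, \ldots, L\}$ in each row (so the first $n-1$ columns carry the $n-1$ largest values); and (II) for every $k \in \{M-n+2, \ldots, M\}$, $a_k < a_{M+1}$. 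Together these make every prefix entry smaller than every suffix entry in both sequences above, and since the suffixes $(a_1, \ldots, a_j)$ and $(a_{M+1}, \ldots, a_{M+j})$ are both strictly increasing, the two reduced forms coincide.

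The main obstacle is verifying (I) and (II) rigorously. For (I), one argues inductively that the $b_r$ chosen by the greedy rule at each step is at most the value currently at column $1$ of row $r$, so that this value shifts upward by one at each of the $M$ steps; this is immediate once $k \geq n-1$ because the last $n-1$ columns then contain only values in $\{1, \ldots, k\}$, and the early steps $k < n-1$ require a routine direct check. For (II), Lemma~\ref{lem2} forces $\sigma'_{M-1} = I_{d;n-1}$, so the last $n-1$ columns at step $M-1$ are already in increasing order in each row; the terminal extension (being the largest) leaves their values unchanged, and combined with (I) this constrains the transitional columns $M-n+2, \ldots, M-1$ to hold values below $a_{M+1}$. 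Both invariants parallel the $1$-dimensional analyses carried out in~\cite{GKSZ, Martin1934}.
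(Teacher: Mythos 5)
Your overall architecture is the same as the paper's: reduce the theorem to showing that each of the $n-1$ wrap-around windows reduces to the same permutation as the corresponding final window of $U'_{d;n}$, and establish this via two order facts, namely that the first $n-1$ columns carry the largest values (your (I), which the paper states as $\Pi_k<\Pi_1<\cdots<\Pi_{n-1}$ for $k\ge n$, and whose inductive justification you sketch adequately), and that the transitional columns are below the terminal increasing tail (your (II)). The problem is that (II) is exactly the crux of the theorem, and your proposed justification does not establish it. The facts you invoke --- $\sigma'_{M-1}=I_{d;n-1}$, the final extension being the largest one, and invariant (I) --- constrain only the columns $1,\dots,n-1$ and $M,\dots,M+n-1$; they say nothing about how the entries in columns $M-n+2,\dots,M-1$ compare with $a_{M+1}$. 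A row of the shape $M{+}1,\dots,L,\;\dots,\;x,\;a_M,a_{M+1},\dots,a_L$ with some $x>a_{M+1}$ in position $M-1$ is perfectly consistent with all three facts, so ``combined with (I) this constrains the transitional columns'' is an assertion, not an argument.

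Statement (II) is equivalent to the paper's claim that $\sigma_i$ ends with the all-maximum column $B_n$ for every $i\ge (n!)^{d-1}-n+2$, and proving it requires an input you never use: the u-word property itself. The paper argues that if some such $\sigma_i$ did not end with $B_n$, then the largest extension of $\sigma'_{i-1}$ must occur elsewhere in $U'_{d;n}$, i.e.\ $\sigma'_{i-1}=\sigma'_{j-1}$ for some $j>i$; transporting the increasing tail coming from $\sigma_{(n!)^{d-1}}=I_{d;n}$ backwards along this order-isomorphism and iterating forces $\Pi_i<\Pi_{i+1}<\cdots<\Pi_{(n!)^{d-1}+n-1}$, contradicting the assumption that $\sigma_i$ does not end with $B_n$. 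Some argument of this kind (a global use of the fact that every $d$-dimensional $n$-permutation occurs exactly once, not just local information about the first and last few columns) is indispensable; without it your proof is incomplete at its central step.
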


\begin{proof}
By Theorem~\ref{u-word-thm}, it suffices to prove that 
$$\red(\Pi_{k}\Pi_{k+1} \cdots \Pi_{k+n-1}) = \red(\Pi_{k}\cdots \Pi_{(n!)^{d-1}} \Pi_1 \cdots \Pi_{k+n-(n!)^{d-1}-1})$$ 
for all $(n!)^{d-1}-n+2 \le k \le (n!)^{d-1}$. Note that $\sigma_i$ ends with $(1,1,\ldots,1)^T$ for all $1\leq i < n$. 
Hence, letting as above $B_x$ denote the column $(x,x,\ldots,x)^T$, we have that
$$U'_{d;n,n-1} = B_nB_{n+1}\cdots B_{2n-2}B_{n-1}B_{n-2}\cdots B_1$$
and so, for all $k \ge n$,
$$\Pi_k < \Pi_1 < \Pi_2 < \cdots < \Pi_{n-1}.$$

Next, we show that $\sigma_i$ ends with $B_n$ for $(n!)^{d-1}-n+2\leq i\leq (n!)^{d-1}$.
Suppose that this is not true for some such $i$. Since $U'_{d;n}$ is a u-word, we must have $\sigma'_{i-1} = \sigma'_{j-1}$ for some $j > i$. 
It follows from $\sigma_{(n!)^{d-1}} = I_{d;n}$ that 
$$\Pi_{(n!)^{d-1}} < \Pi_{(n!)^{d-1}+1} < \cdots < \Pi_{j+n-2}.$$
Then, $\sigma'_{i-1} = \sigma'_{j-1}$ implies that 
$$\Pi_{(n!)^{d-1}+i-j} < \Pi_{(n!)^{d-1}+i-j+1} < \cdots < \Pi_{i+n-2}< \cdots < \Pi_{(n!)^{d-1}+n-1}.$$
Iterating this argument gives that $\Pi_{j} < \Pi_{j+1} < \cdots < \Pi_{(n!)^{d-1}+n-1}$ and thus $\Pi_{i} < \Pi_{i+1} < \cdots < \Pi_{(n!)^{d-1}+n-1}$, contradicting the assumption that $\sigma_i$ does not end with~$B_n$.
Therefore, $\sigma_i$ ends with $B_n$ for all $i \ge (n!)^{d-1}-n+2$, and hence
$$\Pi_k < \Pi_{(n!)^{d-1}+1} < \Pi_{(n!)^{d-1}+2} < \cdots < \Pi_{(n!)^{d-1}+n-1}$$
for all $(n!)^{d-1}-n+2 \le k \le n!$. Therefore,
$$\sigma_{k} = \red(\Pi_{k}\Pi_{k+1} \cdots \Pi_{(n!)^{d-1}}) B_{(n!)^{d-1}-k+2} \cdots B_n = \red(\Pi_{k}\cdots \Pi_{(n!)^{d-1}} \Pi_1 \cdots \Pi_{k+n-(n!)^{d-1}-1})$$ 
for $(n!)^{d-1}-n+2 \le k \le n!$, as desired. 
\end{proof}

\begin{rem}\label{family-of-cycles} In fact, Theorem~\ref{u-cylce-d-dim-thm} gives us a family of u-cycles for multi-dimensional permutations.  Indeed,  it is easy to see that {\em complementing} any row, that is, swapping the smallest letter with the largest letter, the next smallest letter with the next largest letter, etc, in any given row, results in a u-cycle for multi-dimensional permutations, and we have $2^{d-1}$ different u-cycles produced from a given initial u-cycle in this way (the beginnings of length $n-1$ will be distinct). On the other hand, note that such operations as reversing a row or taking its cyclic shift potentially can result in a non-u-cycle for multi-dimensional permutations.\end{rem}

To illustrate Remark~\ref{family-of-cycles}, complementing the second row of $U_{3;3}$ given above, we obtain

\begin{center}
\begin{tiny}
$\left( \begin{array}{cccccccccccccccccccccccccccccccccccc}  
35 & \hspace{-2mm} 36 & \hspace{-2mm} 34 & \hspace{-2mm} 33 & \hspace{-2mm} 32 & \hspace{-2mm} 31 & \hspace{-2mm} 30 & \hspace{-2mm} 26 & \hspace{-2mm} 29 & \hspace{-2mm} 25 & \hspace{-2mm} 28 & \hspace{-2mm} 24 & \hspace{-2mm} 27 & \hspace{-2mm} 23 & \hspace{-2mm} 3 & \hspace{-2mm} 22 & \hspace{-2mm} 2 & \hspace{-2mm} 21 & \hspace{-2mm} 1 & \hspace{-2mm} 5 & \hspace{-2mm} 4 & \hspace{-2mm} 7 & \hspace{-2mm} 6 & \hspace{-2mm} 9 & \hspace{-2mm} 8 & \hspace{-2mm} 11 & \hspace{-2mm} 10 & \hspace{-2mm} 13 & \hspace{-2mm} 12 & \hspace{-2mm} 15 & \hspace{-2mm} 14 & \hspace{-2mm} 16 & \hspace{-2mm} 17 & \hspace{-2mm} 18 & \hspace{-2mm} 19 & \hspace{-2mm} 20 \\

2& \hspace{-2mm}  1& \hspace{-2mm}  3& \hspace{-2mm}  35& \hspace{-2mm}  34& \hspace{-2mm}  36& \hspace{-2mm}  4& \hspace{-2mm}  5& \hspace{-2mm}  6& \hspace{-2mm}  33& \hspace{-2mm}  31& \hspace{-2mm}  32& \hspace{-2mm}  30& \hspace{-2mm}  28& \hspace{-2mm}  7& \hspace{-2mm}  29& \hspace{-2mm}  26& \hspace{-2mm}  27& \hspace{-2mm}  9& \hspace{-2mm}  8& \hspace{-2mm}  10& \hspace{-2mm}  11& \hspace{-2mm}  25& \hspace{-2mm}  23& \hspace{-2mm}  24& \hspace{-2mm}  21& \hspace{-2mm}  12& \hspace{-2mm}  22& \hspace{-2mm}  19& \hspace{-2mm}  20& \hspace{-2mm}  14& \hspace{-2mm}  13& \hspace{-2mm}  15& \hspace{-2mm}  18& \hspace{-2mm}  16& \hspace{-2mm}  17

\end{array}
\right)$
\end{tiny}
\end{center}

\section{Concluding remarks}\label{last-sec}

This paper shows that Martin's greedy algorithm can be used to construct u-cycles for multi-dimensional permutations, while as is discussed in Section~\ref{Martin-other-sec}, this algorithm is not applicable for some sets of combinatorial objects. That would be interesting to build a theory of applicability/suitability of Martin's greedy algorithm in the situations when u-cycles exist. In particular, understanding when the algorithm works on set partitions and when it does not, and what the possible start can be, is an interesting open problem. 

In fact, the notion of a greedy algorithm can be modified, for example, by alternating steps of taking the smallest and largest available options. Such an approach does not work, for example, for the set of all words of length 2 over $\{0,1,2\}$ if we start with 2 (analogously to the original Martin's algorithm) as we get stack before covering all words: $2\rightarrow 20\rightarrow 202 \rightarrow 2021 \rightarrow 20212 \rightarrow 202122$. But are there situations when such a modification would work? Or can it be proved that it never works?

The greedy algorithm to generate u-cycles for multi-dimensional permutation is rather elegant, but can one use the typical graph theoretic approach, analogues to that in Section~\ref{graph-perm-u-cycles-sec} for permutations, to achieve the same goal? At least constructing u-words for multi-dimensional permutations is straightforward here as one can easily generalize the notion of $P(n)$ to that of $P_d(n)$ for $d$-dimensional $n$-permutations, where the vertices are all such permutations and there is an edge from a permutation $X$ to a permutation $Y$ if the suffix of each row of $X$ is order-isomorphic to the prefix of the same row of $Y$.   One can then mimic the steps of clustering of $P(n)$ in \cite{CDG1992} to get clustering of $P_d(n)$ (where cluster's in- and out-degrees will be $n^{d-1}$) and then deduce Hamiltonicity of $P_d(n)$ that can be easily translated into a u-word. 

\begin{figure}[ht]
\begin{center}
\begin{tabular}{cc}
$\left(\begin{array}{ccc} 1 & 2 & 3 \\ 1 & 3 & 2 \end{array}\right)$ & $\left(\begin{array}{ccc} 1 & 3 & 2 \\ 1 & 2 & 3 \end{array}\right)$ \\[4mm]
$\left(\begin{array}{ccc} 2 & 3 & 1 \\ 3 & 2 & 1 \end{array}\right)$ & $\left(\begin{array}{ccc} 3 & 2 & 1 \\ 2 & 3 & 1 \end{array}\right)$ \\[4mm]
$\left(\begin{array}{ccc} 3 & 1 & 2 \\ 2 & 1 & 3 \end{array}\right)$ & $\left(\begin{array}{ccc} 2 & 1 & 3 \\ 3 & 1 & 2 \end{array}\right)$ 
\end{tabular}
\end{center}
\caption{A key group and the respective head group producing a small cycle read by column, top to bottom, left to right}\label{123-132}
\end{figure}

However, proving that there is a u-cycle using the methods in \cite{Hurlbert} outlined in Section~\ref{Martin-other-sec} does not seem to work at least for $n=3$. Indeed, consider the case of $d=3$. It is natural (but maybe not the only way?) to define the notion of key and head requiring the second row (out of three, the top row is always 123 and hence can be removed) to be key and head, respectively (recall the definitions in Section~\ref{Martin-other-sec}). Hence, instead of 2 groups of 3-permutations in Figure~\ref{perm-list-S3}, we will have 12 groups of 3 3-permutations as each group in Figure~\ref{perm-list-S3} will be repeated 6 times (the number of ways to chose the second independent permutation).  We would expect a key group and the head group corresponding to it to be connected into a small cycle, like it is the case  in Figure~\ref{perm-list-S3}, with a hope to connect the cycles between each other by the switching, or other methods. This is indeed the case for the key group generated by $\left(\begin{array}{ccc} 1 & 2 & 3 \\ 1 & 2 & 3 \end{array}\right)$ and the respective head group (as it is not different from the $d=2$ case), and even for the key group generated by $\left(\begin{array}{ccc} 1 & 2 & 3 \\ 1 & 3 & 2 \end{array}\right)$ and the respective head group (see Figure~\ref{123-132}). However, this does not work, for example, for the key group generated by $\left(\begin{array}{ccc} 1 & 2 & 3 \\ 2 & 3 & 1 \end{array}\right)$ and the respective head group (see Figure~\ref{123-231}).

\begin{figure}[h]
\begin{center}
\begin{tabular}{cc}
$\left(\begin{array}{ccc} 1 & 2 & 3 \\ 2 & 3 & 1 \end{array}\right)$ & $\left(\begin{array}{ccc} 1 & 3 & 2 \\ 2 & 1 & 3 \end{array}\right)$ \\[4mm]
$\left(\begin{array}{ccc} 2 & 3 & 1 \\ 3 & 1 & 2 \end{array}\right)$ & $\left(\begin{array}{ccc} 3 & 2 & 1 \\ 1 & 3 & 2 \end{array}\right)$ \\[4mm]
$\left(\begin{array}{ccc} 3 & 1 & 2 \\ 1 & 2 & 3 \end{array}\right)$ & $\left(\begin{array}{ccc} 2 & 1 & 3 \\ 3 & 2 & 1 \end{array}\right)$ 
\end{tabular}
\end{center}
\caption{A key group and the respective head group {\em not} producing a small cycle read by column, top to bottom, left to right}\label{123-231}
\end{figure}

We  leave it as an open question to modify the method in \cite{Hurlbert} to construct u-cycles for permutations (sketched in Section~\ref{graph-perm-u-cycles-sec}) to obtain a u-cycle for multi-dimensional permutations. Analogously to the conjecture about $P(n)$, it is conceivable that {\em any} Hamiltonian cycle in $P_d(n)$ can be turned into a u-cycle, but this may be difficult to confirm.  One can also study the minimum number of letters required to produce a u-cycle for multi-dimensional permutations, as it is done in \cite{Johnson} for usual permutations. However, the steps in \cite{Johnson} do not seem to be extendable (easily) to the case of multi-dimensional permutations. 

\vskip 3mm
\noindent {\bf Acknowledgments.} The authors are grateful to Glenn Hulbert for sharing with us a copy of  \cite{Hurlbert}. The first  author is supported by Leverhulme Research Fellowship (grant reference RF-2023-065\textbackslash 9).  The second author is supported by the National Natural Science Foundation of China (NSFC) grants 12171034 and 12271023.

\end{document}